 \newtheorem{theorem}{Theorem}[section]
 \newtheorem{lemma}[theorem]{Lemma}
 \theoremstyle{definition}
 \theoremstyle{remark}
 \numberwithin{equation}{section}
\begin{document}

\title[Caustics of a Paraboloid and Apollonius Problem]
 {Caustics of a Paraboloid\\ and Apollonius Problem}

\author{Yagub N. Aliyev}

\address{%
School of IT and Engineering\\ 
ADA University\\
Ahmadbey Aghaoglu str. 61 \\
Baku 1008, Azerbaijan}

\email{yaliyev@ada.edu.az}

\thanks{This work was supported by ADA University Faculty Research and Development Fund.}

\subjclass{Primary: 53A05; Secondary: 53A04, 51M16.}

\keywords{Parabola, Neile's parabola, elliptical paraboloid, caustics, normals, discriminant, cubic, curve, surface, centers of principal curvature, focal surface, centro-surface, Seidel's formula, Apollonius problem.}

\date{January 1, 2004}

\begin{abstract}
We study caustics of an elliptical paraboloid and the history of their various representations from 3D models in XIX century to the recent computer graphics. In the paper two ways of generating the surface, one with cartesian coordinates using formula for principal curvatures, and the other one with parabolic coordinates using Seidel's formula were demonstrated. By finding the intersection curves of these caustics with the paraboloid we extend the solution of F. Caspari for classical Apollonius problem about the number of concurrent normals to the points of the paraboloid itself. A complete classification of all possible cases of intersections of these caustics with their paraboloid is given.
\end{abstract}

\maketitle
\section{Introduction} 
Apollonius in fifth book of his \textit{Conics} \cite{apol} asks this question about normals: how many concurrent normals of an ellipse can one draw? His solution, which is very intricate, revealed that there are points that play the role of a boundary, where the number of normals jump from 2 to 4. We now call the curve consisted of points with such properties as \textit{caustics}. The problem can be naturally extened for parabolas and it happens to be easier than the one for ellipses but still not trivial. The problem about the number of normals of an ellipse also appear in V. Arnold's collection of problems "Mathematical Trivium" which he considered as mathematical minimum for a physics student (see e.g. \cite{kheshin}, p. 49, 58-59). This problem  was also offered at the First All-Union Mathematical Olympiad for USSR college students in 1974 and was solved completely by only one student (see \cite{arnold}, p. 285-286). See also \cite{bains} and \cite{mcgif} for the study of this problem in connection with a cubic equation. Caustics of an ellipse and a hyperbola were studied in connection with optics in 
\cite{cayley1}. It would be waste of space here to explore a more detailed history of this problem and its generalization for an ellipsoid, as it was topic of the previous article of the author \cite{aliyev}. In more general setting the problem of the number of normals of surfaces was studied in \cite{clebsch}. From recent works in this direction one can mention \cite{hann}, \cite{dom} and their references. The number of normals has a direct interpretation as number of static balance points of the given body if the center of mass is located at $A$. This fundamental property is described, for example, in the classic book by Poston and Stewart \cite{post} (see p. 5-6). In this book we read that the connection with caustics was one of starting-points of Thom in developing his
catastrophe theory.

While studying the problem on the number of normals to a surface it soon becomes clear that the most natural way to do this task is to find the centers of principal curvatures of the surface (\cite{schroder}, p. 9; \cite{caspari}, p. 8). Although in 2 dimensional problem not using the centers of curvature can be compensated by analytic and algebraic computations, for 3 dimensions not using these curvature centers makes the problem considerably difficult to study. These centers of principal curvature were first introduced by G. Monge
\cite{monge} for completely different purposes. The particular case of the surface of centers of an ellipsoid was studied by A. Cayley \cite{cayley}. The case of hyperboloid (see \cite{dyck2} for necessary formulae) is very similar to that of the ellipsoid and with some work can be derived from \cite{aliyev}. The literature about the case of ellipsoids is extensive and references of \cite{aliyev} lists both historical sources and the recent ones.

In the current work we focus on Caustics of elliptical paraboloids and their uses in determination of the number of concurrent normals of the paraboloid. The caustics of Paraboloids together with the problem of normals were topics of doctoral dissertations by F. Caspari (1875) \cite{caspari} (see also \cite{caspari1}) and H. Schröder (1913) \cite{schroder}. The works by F. Caspari (1875) \cite{caspari} (see also \cite{caspari1}) present detailed investigation of the problem of normals in general. The cases which are studied in the current paper required more delicate consideration of the possible situations. To be more precise, the cases when the point of concurrency of normals is on the paraboloid itself are studied here with more details. This case require determination of the intersection curves of the paraboloid and its caustics, which seems have escaped the attention of the previous studies. Together with the nodal curve and the intersections of the caustics with the coordinate planes, these intersection curves provide more information about the mutual position of the paraboloid and its caustics. This idea of using the intersection curves was first explored for ellipsoids in \cite{aliyev}. The current paper is my attempt to extend the idea to the case of paraboloids. As the presentation below show, the case of paraboloids involves less number of cases than the number for ellipsoids. The solution for ellipsoids involve (to use a couple rough metrics) 12 cases for the position of caustics with respect to each other and the ellipsoid, and 8 cases for their intersections, where border cases were not counted. The solution of a similar problem for paraboloids require only 4 cases including the border one. The comparison (and the finer details of the following text) suggests that the choice of paraboloid is indeed simpler. But there is no direct analogy and it is not at all obvious how the results for the ellipsoids can give the following results for the paraboloids. In any case the 4 cases $a<b\le2a$, $2a<b<3a$, $b=3a$, and $3a<b$ highlighted in the current paper did not show up in the studies by \cite{caspari}, \cite{caspari1}, and \cite{schroder}.

Caustics of paraboloid play important role in optics, astronomy 
(see \cite{seidel}, \cite{seidel1}, \cite{seidel2}, \cite{schmidt} and its references) and mathematical theory of shipbuilding (pages 206-209 of \cite{post}). As this surface does not appear frequently in real life and nature, there were many attempts to recreate it as a drawing or as a physical 3D model. There are several models of these surfaces in various museums and private collections. One such model from M. Schilling catalog and made by L. Schleiermacher under supervision of Prof. Dr. L. Brill in TU München is in The Collection of Geometric Models
of V.N. Karazin Kharkiv National University \cite{space}. Similar models made by L. Schleiermacher are in Göttinger Sammlung mathematischer Modelle und Instrumente, Georg-August-Universität Göttingen 
\cite{goet}. Two such models from 1877 are in Museum of Kazan Federal University \cite{kazan}, which were made again by L. Schleiermacher and published as part of the first series issued by Brill \cite{brill1}. In p. 46-47 of \cite{brill1} we also find the description of a model of elliptical paraboloid $\frac{y^2}{12}+\frac{z^2}{10}-2x=0$ without a picture (see also \cite{schil}, p. 128-129). One more model from 1892 with the name of L. Brill on it is in The National Museum of American History \cite{brill}. A photo of another such model from Hallenser Collection appear in p. 264 of the dissertation of H. Junker from 2023 \cite{junker}. Some drawings of such models appear also at the end of the doctoral dissertation by H. Schröder (1913) \cite{schroder} (see also \cite{schroder1}). The caustics of a paraboloid, although not named as such, appear also on page 209 of \cite{post}.

With the dawn of computer graphics and 3D printers, these gypsum models became redundant as one can instantly generate arbitrarily many different caustic surfaces just by changing initial settings in a mathematical software and then show them on the screen of a computer, print on a paper, and even print models with a 3D printer. Seidel's formula that we use in the current paper to draw in GeoGebra were also used in \cite{kooij} to plot these caustics using Maple. Focal surface of an elliptical paraboloid was also featured in two YouTube videos by Ambjörn Naeve, who also included them in a playlist with other similar videos for hyperbolic paraboloid \cite{naev} (see  \cite{naev1} for more information). Focal surfaces of a hyperbolic paraboloid appear as an example in Wikipedia article about focal surfaces in general \cite{wiki}.

\section{Parabola}
Let us consider parabola $$y=\frac{ax^2}{2}.\eqno(1)$$Let us take a point $A(l,m)$ on its plane and find point $B(x,y)$ on the parabola for which the normal of the parabola at point $B$ passes through $A$. This happens when $$\frac{y-m}{x-l}=-\frac{1}{y'}=-\frac{1}{ax}.$$
This qives us equation of hyperbola $y=m-\frac{x-l}{ax}$, whose intersections with the parabola are points $B_i$ with normals  passing through $A$ (see Figure \ref{fig1}). The $x$-coordinates of these intersections are the solutions of cubic equation
$$
a^2x^3-2(am-1)x-2l=0.
$$
It is known that the discriminant of a depressed cubic polynomial $x^3+px+q$ is $-4p^3-27q^2$. So, the roots of our cubic equation are repeated when $$(am-1)^3=\frac{27}{8}a^2l^2.$$ This gives us semicubical  parabola $$(ay-1)^3=\frac{27}{8}a^2x^2,\eqno(2)$$also called Neile's parabola, which separates the regions of the plane where the the number of normals jumps from 1 to 3 or vice versa. Note that the normals of parabola (1) are the tangents of semicubical parabola (2) (cf. \cite{post}, p. 87). Below, on, and above the semicubical parabola, which serves also as the centers of curvature for (1), the number of normals is 1, 2, and 3, respectively. Exception is point $\left(0,\frac{1}{a}\right)$, which is focus point of parabola (1) and cusp point of semicubical parabola (2). This point is on semicubical parabola (2) but the number of normals at this point is 1. Also, on parabola (1) one of the points $B_i$ coincide with $A$. On parabola (1) the number of normals jumps from 1 to 3 or vice versa at points $C_i\left(\pm\frac{2\sqrt{2}}{a},\frac{4}{a}\right)$, which are intersection points of (1) and (2). This completely solves classical problem of Apollonius for the number of concurrent normals for parabola.
\begin{figure}
\centering

  \includegraphics[width=0.7\linewidth]{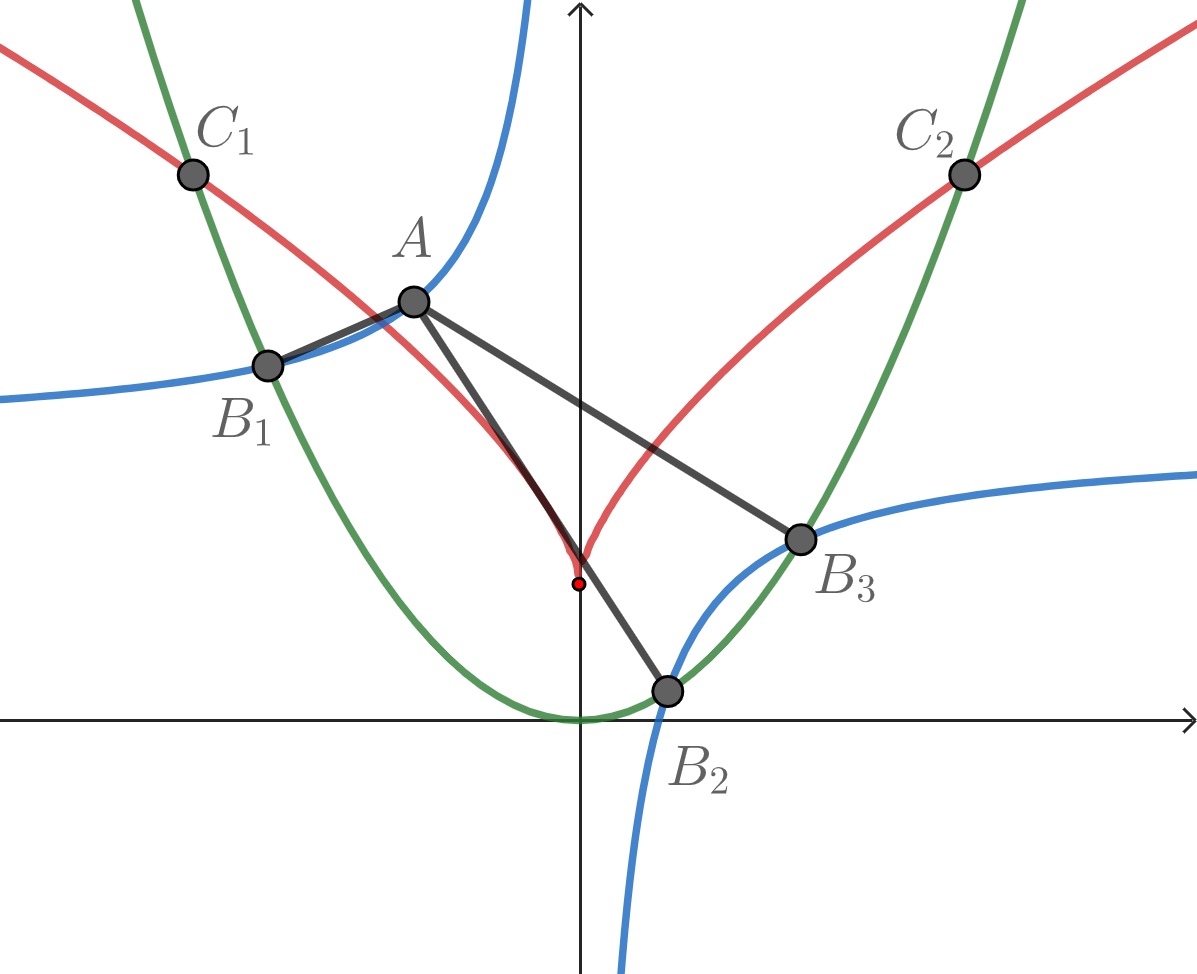}
  \captionof{figure}{Parabola $y=\frac{ax^2}{2}$ (green), hyperbola $y=m-\frac{x-l}{ax}$ (blue), semicubical parabola $(ay-1)^3=\frac{27}{8}a^2x^2$ (red), and the normal lines (black): \url{https://www.geogebra.org/calculator/qpvn9gzj}}
  \label{fig1}

\end{figure}

\section{Normals of Paraboloid}
Let us now consider paraboloid $$z=\frac{ax^2+by^2}{2}.\eqno(3)$$We will consider only the case of elliptical paraboloid ($ab>0$) as the case of the hyperbolic paraboloid ($ab<0$) can be done analogously. The cases of the paraboloid of revolution ($a=b$) and parabolic cylinder ($ab=0$) are trivial as they can be easily derived from the case of parabola in the previous section. Without loss of generality we can assume that $0<a< b$. In curvilinear coordinates $u,v$, which are similar to parabolic coordinates, this paraboloid can be parametrized as (see \cite{caspari}, p. 1; \cite{caspari1}, p. 143; \cite{schroder}, p. 7)
$$
(x(u,v))^2=\frac{b(au-1)(av+1)}{a^2(b-a)},\ (y(u,v))^2=-\frac{a(bu-1)(bv+1)}{b^2(b-a)},
$$
$$
z(u,v)=\frac{ab(u-v)-a-b}{2ab}. \eqno(4)
$$
Let us take a point $A(l,m,n)$ in space and find points $B_i(x,y,z)$ on the surface of paraboloid (3) for which the normal of the paraboloid at point $B$ passes through $A$. This happens when $$\frac{x-l}{-ax}=\frac{y-m}{-by}=z-n=t,$$
where we used the fact that the inner normal vector of paraboloid (3) at point $B(x,y,z)$ is $\vec{N}=\left({-ax},{-by},1\right)$. 
These points can be defined as intersection of parametric curve $\left(\frac{l}{1+at},\frac{m}{1+bt},n+t\right)$, where $-\infty<t<+\infty$, with paraboloid (3) (see Figure \ref{fig2}). The asymptotes of this parametric curve are $z$-axis and lines
$$
\left(t,\frac{ma}{a-b},-\frac{1}{a}+n\right), \left(\frac{lb}{b-a},-\frac{1}{b}+n,t\right)\ (-\infty<t<+\infty),
$$
which become part of the parametric curve if $l=0$ and $m=0$, respectively. Note that if $l=0$ and $m=0$, then the parametric curve itself becomes hyperbolas $\left(0,\frac{m}{1+bt},n+t\right)$ and $\left(\frac{l}{1+at},0,n+t\right)$, respectively. The values of $t$ corresponding to the intersection points are the solutions of equation (see 
\cite{smith}, p. 81)
$$
2 (t+n)=\frac{l^{2}}{a \left(t+\frac{1}{b}\right)^{2}}+\frac{m^{2}}{b \left(t+\frac{1}{a}\right)^{2}}.
$$
\begin{figure}
\centering
  \includegraphics[width=0.7\linewidth]{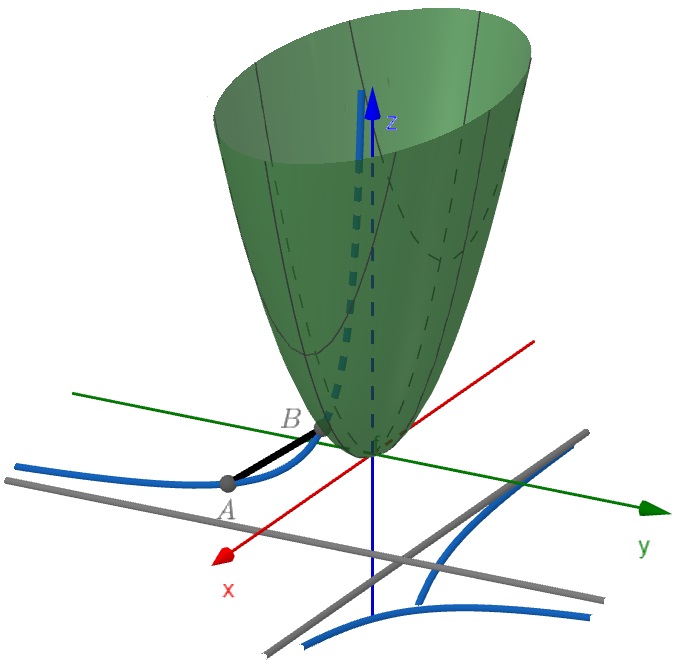}
  \captionof{figure}{Elliptical paraboloid $z=\frac{ax^2+by^2}{2}$ (green), parametric curve $\left(\frac{l}{1+at},\frac{m}{1+bt},n+t\right)$ (blue), its asymptotes (grey), and the normal line (black): \url{https://www.geogebra.org/3d/rbp3mxnc}}
  \label{fig2}
\end{figure}
If $lm\ne 0$, then there is always one such $t_i$ in interval $\left(-\frac{1}{b},+\infty\right)$. In each of intervals $\left(-\infty, -\frac{1}{a}\right)$ and $\left(-\frac{1}{a},-\frac{1}{b}\right)$, there are 0, 1, or 2 roots $t_i$ depending on the choice of $A(l,m,n)$. Note that the last equality can be written as a quintic equation with real cofficients, and therefore it has at least 1 and at most 5 real roots $t_i$, and 
consequently the number of points $B_i$ can be 1,2,3,4, or 5 depending on the choice of point $A$, and each case is realizable. The coordinates of points $B_i$ are $\left(\frac{l}{1+at_i},\frac{m}{1+bt_i},n+t_i\right)$ ($i=1,2,\ldots,5$).

\section{Caustics of paraboloid}
We will now describe points $A$ of the space for which the number of normals jumps from 1 to 3, or from 3 to 5. It is well known that points $A$, for which two normals of a surface coincide, are the centers of principal curvatures of the surface (\cite{schroder}, p. 9; \cite{caspari}, p. 8). The surface formed by these points is known by different names such as evolute, focal surface or centro-surface (surface of centers). Using the formula for principal curvatures in terms of Gaussian and mean curvatures (\cite{pat}, p. 64; \cite{mish}, p. 179, note that the definition of the mean curvature in \cite{mish} is different cf. \cite{nov}, p. 107) one can write formula for the coordinates of these centers of principal curvature of the paraboloid as (\cite{blas}, p. 88, Formula 20)
$$
(X,Y,Z)=(x,y,z)+R_{1,2}\cdot\frac{\vec{N}}{|\vec{N}|}, \eqno(5)
$$
where $R_{1,2}=\frac{1}{H\mp\sqrt{H^2-K}}$ are principal curvatures, and $H=\frac{a+b+a^2bx^2+ab^2y^2}{2\left(1+a^2x^2+b^2y^2\right)^{\frac{3}{2}}}$ and $K=\frac{ab}{(1+a^2x^2+b^2y^2)^{2}}$ are mean and Gaussian curvatures, respectively. In curvilinear coordinates $u_1,v_1$ and $u_2,v_2$, these two caustics (see Figure \ref{fig2}) can be expressed as (see \cite{caspari}, p. 2; \cite{seidel}, p. 698; \cite{schlei}, p. 5; \cite{caspari1}, p. 144; \cite{schroder}, p. 10)
$$
(x_1(u_1,v_1))^2=\frac{b(au_1-1)(av_1+1)^3}{a^2(b-a)},\ (y_1(u_1,v_1))^2=-\frac{a(bu_1-1)(bv_1+1)^3}{b^2(b-a)},
$$
$$
z_1(u_1,v_1)=\frac{ab(u_1-3v_1)-a-b}{2ab}, \eqno(6)
$$
$$
(x_2(u_2,v_2))^2=\frac{b(au_2-1)^3(av_2+1)}{a^2(b-a)},\ (y_2(u_2,v_2))^2=-\frac{a(bu_2-1)^3(bv_2+1)}{b^2(b-a)},
$$
$$
z_2(u_2,v_2)=\frac{ab(3u_2-v_2)-a-b}{2ab}. \eqno(7)
$$
Note that (7) coincides with (6) if $u_2=-v_1$ and $v_2=-u_1$. This surface is of ninth degree \cite{caspari1}, p. 148.

If point $A$ is below the two caustics, then the number of normals of the paraboloid concurrent at $A$ is 1. If point $A$ is above the two caustics, then the number is 5. Finally, if point $A$ is between the two caustics, then the number of normals is 3. If point $A$ is on the surface of the caustics then the number of normals is 2 or 4, depending on which regions the surface is a border of, between regions with 1 and 3, or between regions with 3 and 5 normals, respectively (\cite{caspari1}, p. 150). There are some exceptions to this on the coordinate planes and the intersections of the two caustics which we will discuss now.

By substituting (6) in (3) we obtain $u_1=\frac{v_1}{a v_1+b v_1+3}$. Using this in (6) and denoting $v_1=t$ we obtain parametrization of the intersection of (3) with its caustics (6) as
$$
(x_1(t))^2=\frac{b \left(b t+3\right) \left(a t+1\right)^{3}}{a^{2} \left(a-b\right) \left(\left(a+b\right) t+3\right)},\ (y_1(t))^2=-\frac{a \left(a t+3\right) \left(b t+1\right)^{3}}{ b^{2}\left(a-b\right) \left(\left(a+b\right) t+3\right)},
$$
$$
z_1(t)=-\frac{3 a b\left(a+b\right)  t^{2}+ \left(a^{2} +10 a b +b^{2} \right) t+3 (a+ b)}{2a b \left( \left(a+b\right) t+3\right) }.  \eqno(8)
$$
Similarly, by substituting (7) in (3) we obtain $v_2=-\frac{u_2}{a u_2+b u_2-3}$ and using this in (7) and denoting $u_2=s$, we obtain parametrization for intersection of (3) with (7) as
$$
(x_2(s))^2=-\frac{b \left(a s-1\right)^{3} \left(b s-3\right)}{a^{2} \left(a-b\right) \left(\left(a+b\right) s-3\right)},\ (y_2(s))^2=\frac{a \left(b s-1\right)^{3} \left(a s-3\right)}{b^{2} \left(a-b\right)\left(\left(a+b\right)s-3\right) },
$$
$$
z_2(s)=\frac{3 a b\left(a+b\right)  s^{2}-\left(a^{2} +10 a b +b^{2} \right) s+3 (a+ b)}{2a b \left( \left(a+b\right) s-3\right) },  \eqno(9)
$$
which coincides with (8) if $s$ is replaced by $-t$. By solving equation $z_1(t)=z_2(s)$ for $t$, we obtain
$$
t=\frac{8-3\left(a+ b\right) s}{\left(a+b\right) \left(\left(a+b\right) s-3\right)}.
$$
By substituting this in equation $x_1(t)=x_2(s)$ and then solving this equation for $s$ gives
$$
s_0=\frac{3 a^{2}-2 a b+3 b^{2}\pm (a-b)\sqrt{3 \left(3 a-b\right) \left(a-3 b\right)}}{2 a b \left(a+b\right)},
$$
which is real if $3a\le b$.
Substituting $s=s_0$ in (9) gives the coordinates of its self-intersection points (see Figure \ref{fig3})
$$
E_{1,2}(\pm x_2(s_0),\pm y_2(s_0),z_2(s_0)),\ E_{3,4}(\pm x_2(s_0),\mp y_2(s_0),z_2(s_0)).
$$
In particular, $
z_2(s_0)=\frac{4 \left(a-b\right)^{2}}{ab\left(a+b\right) }$. For a concise formula for the other two coordinates $x_2(s_0),\  y_2(s_0)$ it is more convenient to use a different method. We will return to this question in the next section.
\begin{figure}
\centering
  \includegraphics[width=0.7\linewidth]{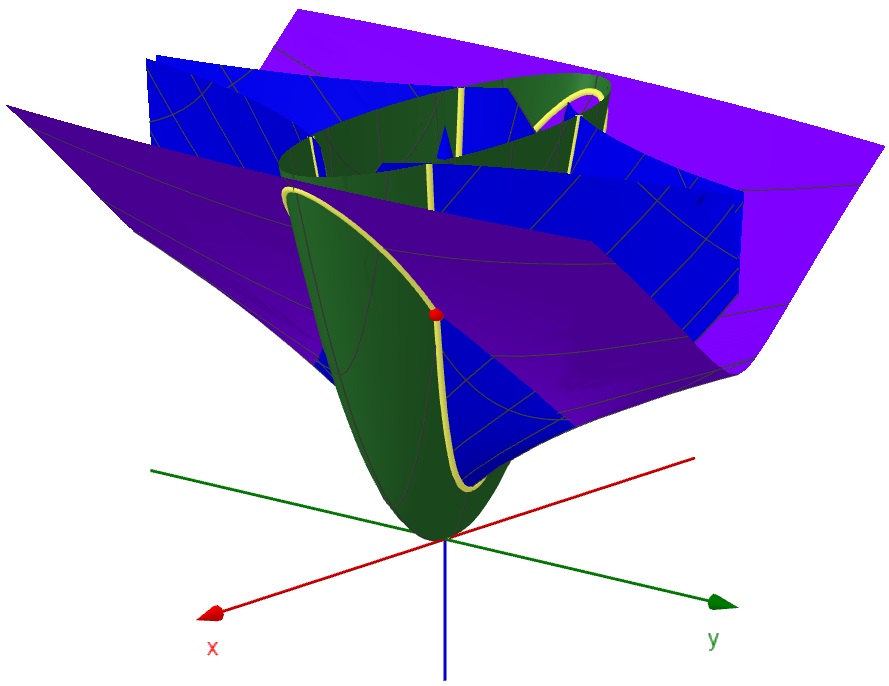}
  \captionof{figure}{Elliptical paraboloid $z=\frac{ax^2+by^2}{2}$ (green), the intersection curves (yellow) of the paraboloid with its caustics (violet and blue), the self-intersection points (red) of these curves: \url{https://www.geogebra.org/3d/p7vrgv6w}}
  \label{fig3}
\end{figure}

\section{Nodal curve of Paraboloid}

We will now find a parametrization for the intersection curve of caustics (6) and (7), which can also be interpreted as self-intersection of (6) or (7). This intersection curve is known as \textit{nodal curve} (see \cite{cayley}, p. 332 for ellipsoid). First we solve system of equations
$$
\{x_1(u_1,v_1)=x_2(u_2,v_2),\ y_1(u_1,v_1)=y_2(u_2,v_2),\}
$$
for $u_1$ and $v_2$, to obtain
$$
v_2=\frac{a\left(1+\left(\frac{bu_2-1}{bv_1+1}\right)^3\right)-b\left(1+\left(\frac{au_2-1}{av_1+1}\right)^3\right)}{ab\left(\left(\frac{au_2-1}{av_1+1}\right)^3-\left(\frac{bu_2-1}{bv_1+1}\right)^3\right)},  \eqno(10)
$$
and a similar formula for  $u_1$. Then we substitute these formula in equation $
z_1(u_1,v_1)=z_2(u_2,v_2),$ denote $v_1=t$ and solve equation $
z_1(u_1,v_1)=z_2(u_2,v_2),$ for $u_2$ to obtain
$$
u_2(t)=\frac{4 a b t+3 a+3 b+\sqrt{12 a^{2} b^{2} t^{2}+12 a^{2} b t+12 a \,b^{2} t+9 a^{2}-6 a b+9 b^{2}}}{2 a b}.   \eqno(11)
$$
Note that the discriminant of the quadratic function under the radical is $D=-288 a^{2} b^{2} \left(a-b\right)^{2}<0$. Then equality (10) gives
$$
v_2(t)=\frac{a\left(1+\left(\frac{bu_2(t)-1}{bt+1}\right)^3\right)-b\left(1+\left(\frac{au_2(t)-1}{at+1}\right)^3\right)}{ab\left(\left(\frac{au_2(t)-1}{at+1}\right)^3-\left(\frac{bu_2(t)-1}{bt+1}\right)^3\right)}.  \eqno(12)
$$
From (7) it follows that the nodal curve can be parametrized as
$$
(x(t))^2=\frac{b(au_2(t)-1)^3(av_2(t)+1)}{a^2(b-a)},\ (y(t))^2=-\frac{a(bu_2(t)-1)^3(bv_2(t)+1)}{b^2(b-a)},
$$
$$
z(t)=\frac{ab(3u_2(t)-v_2(t))-a-b}{2ab}, \eqno(13)
$$
where $u_2(t)$ and $v_2(t)$ are defined by (11) and (12), respectively. The nodal curve also passes through intersection points $E_i$ of curves (8) and (9) whenever points $E_i$ exist (see Figure \ref{fig4}). The conditions for the existence of points $E_i$ will be obtained in the next section. A different parametrization for the nodal curve of the paraboloid is given in \cite{caspari1}, p. 185 (cf. \cite{schlei}, p. 10):
$$
(x(t))^2=\frac{8a\delta^2(t+\delta)^3(t-2\delta)^2}{t^4},\ (y(t))^2=\frac{8b\delta^2(t-\delta)^3(t+2\delta)^2}{t^4},
$$
$$
z(t)=\frac{8\delta^2-t^2}{t}+\frac{a+b}{2ab},
$$
where $\delta=\frac{b-a}{2ab}$. Note that the formula for $z$-coordinate is slightly shifted in comparison to \cite{caspari1}, p. 185. It is similar to Cayley's parametrization for the nodal curve of an ellipsoid in \cite{cayley}, p. 351, which was used in \cite{aliyev} to solve similar questions for a triaxial ellipsoid. By solving equation $
z(t)=z_2(s_0)$ or, which is the same,
$$
\frac{8\delta^2-t^2}{t}+\frac{a+b}{2ab}=\frac{4 \left(a-b\right)^{2}}{ab\left(a+b\right)},
$$
we obtain $t_0=\frac{a+b}{2 a b}$. Using this in the last parametrization we obtain
$$
x_2(s_0)=x(t_0)=\frac{2\sqrt{2}(b-a)(b-3a)}{a(a+b)^2},\ y_2(s_0)=y(t_0)=\frac{2\sqrt{2}(b-a)(3b-a)}{b(a+b)^2},
$$
where $b\ge3a$. Thus all the coordinates of points $E_i$ are now calculated:
$$
E_{1,2}\left(\pm \frac{2\sqrt{2}(b-a)(b-3a)}{a(a+b)^2},\pm \frac{2\sqrt{2}(b-a)(3b-a)}{b(a+b)^2},\frac{4 \left(a-b\right)^{2}}{ab\left(a+b\right)}\right),
$$
$$
E_{3,4}\left(\pm \frac{2\sqrt{2}(b-a)(b-3a)}{a(a+b)^2},\mp \frac{2\sqrt{2}(b-a)(3b-a)}{b(a+b)^2},\frac{4 \left(a-b\right)^{2}}{ab\left(a+b\right)}\right).
$$
The other solution $t_1=-\frac{4 \left(a-b\right)^2}{a b \left(a+b\right)}$ of equation $
z(t)=z_2(s_0)$ does not give a real point.
\begin{figure}
\centering
  \includegraphics[width=0.8\linewidth]{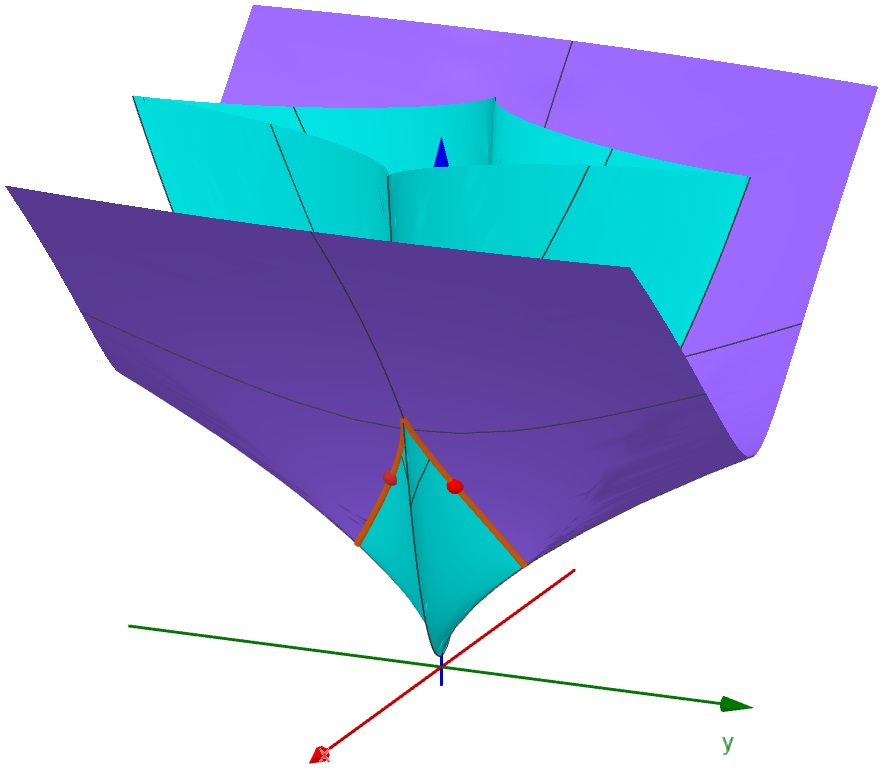}
  \captionof{figure}{Caustics (purple and blue) of paraboloid, the nodal curve (orange), and its intersection points $E_i$ (red) with the paraboloid (not shown): \url{https://www.geogebra.org/3d/d66y3gjw}}
  \label{fig4}
\end{figure}
If point $A$ is on the nodal curve, then the number of normals through point $A$ of the paraboloid is 3 \cite{caspari1}, p. 176, except each of its endpoints $H_i$ and $K_i$, where the number of normals is 2. This will be discussed in the next section.

\section{Intersections with coordinate planes}
We now focus on coordinate planes $x=0$ and $y=0$. Paraboloid (3) intersects these planes along parabolas $z=\frac{by^2}{2}$,  $x=0$, and $z=\frac{ax^2}{2}$, $y=0$. By substituting, for example, $u=\frac{1}{a}$ and $u=\frac{1}{b}$, together with notation $v=t$ in (4), these parabolas can also be written parametrically as
$$
x(t)=0,\ (y(t))^2=-\frac{bt+1}{b^2},\ z(t)=-\frac{bt+1}{2b}, \eqno(14)
$$
$$
(x(t))^2=-\frac{at+1}{a^2},\ y(t)=0,\ z(t)=-\frac{at+1}{2a}, \eqno(15)
$$
respectively. By substituting $u_1=\frac{1}{a}$ and $u_2=\frac{1}{a}$, together with notations $v_1=t$ and $v_2=t$ in (6) and (7), we obtain
$$
x(t)=0,\ (y(t))^2=-\frac{(bt+1)^3}{b^2},\ z(t)=-\frac{3bt+1}{2b}, \eqno(16)
$$
$$
x(t)=0,\ (y(t))^2=-\frac{(b-a)^2(bt+1)}{a^2b^2},\ z(t)=-\frac{abt+a-2b}{2ab}, \eqno(17)
$$
respectively. Similarly, by substituting $u_1=\frac{1}{b}$ and $u_2=\frac{1}{b}$, together with notations $v_1=t$ and $v_2=t$ in (6) and (7), we obtain
$$
(x(t))^2=-\frac{(at+1)^3}{a^2},\ y(t)=0,\ z(t)=-\frac{3at+1}{2a}, \eqno(18)
$$
$$
(x(t))^2=-\frac{(b-a)^2(at+1)}{a^2b^2},\ y(t)=0,\ z(t)=-\frac{abt+b-2a}{2ab}. \eqno(19)
$$
Note that (16) and (18) are semicubical parabolas, while (17) and (19) are just parabolas. On each plane some of these curves intersect and the resulting intersection points are shown in Table \ref{tab1}.
\tiny
\begin{table} [h!]
\begin{center}
\begin{tabular}{||c||c|c|c||} 
 \hline
 & (14) & (16) & (17) \\ [0.5ex] 
 \hline\hline
 (14) & -- & $F_1\left(0,\frac{2\sqrt{2}}{b},\frac{4}{b}\right)$ &  $G_1\left(0,\sqrt{\frac{2(a-b)^2}{ab^2(b-2a)}},\frac{(a-b)^2}{ab(b-2a)}\right)$  \\ 
 \hline
 (16) &  $F_2\left(0,-\frac{2\sqrt{2}}{b},\frac{4}{b}\right)$  & -- &  $H_1\left(0,\sqrt{\frac{(b-a)^3}{a^3b^2}},\frac{3b-a}{2ab}\right)$  \\
 \hline
 (17) &  $G_2\left(0,-\sqrt{\frac{2(a-b)^2}{ab^2(b-2a)}},\frac{(a-b)^2}{ab(b-2a)}\right)$  &  $H_2\left(0,-\sqrt{\frac{(b-a)^3}{a^3b^2}},\frac{3b-a}{2ab}\right)$  & -- \\ [1ex] 
 \hline
\end{tabular}

\end{center}

\begin{center}
\begin{tabular}{||c||c|c|c||} 
 \hline
 & (15) & (18) & (19) \\ [0.5ex] 
 \hline\hline
 (15) & -- &  $I_1\left(\frac{2\sqrt{2}}{a},0,\frac{4}{a}\right)$  & $J_1\left(\sqrt{\frac{2(a-b)^2}{a^2b(a-2b)}},0,\frac{(a-b)^2}{ab(a-2b)}\right)$ \\ 
 \hline
 (18) & $I_2\left(-\frac{2\sqrt{2}}{a},0,\frac{4}{a}\right)$ & -- & $K_1\left(\sqrt{\frac{8(b-a)^3}{a^2b^3}},0,\frac{4b-3a}{ab}\right)$ \\
 \hline
 (19) & $J_2\left(-\sqrt{\frac{2(a-b)^2}{a^2b(a-2b)}},0,\frac{(a-b)^2}{ab(a-2b)}\right)$ & $K_2\left(-\sqrt{\frac{8(b-a)^3}{a^2b^3}},0,\frac{4b-3a}{ab}\right)$ & -- \\ [1ex] 
 \hline
\end{tabular}

\end{center}
    \caption{Intersections with the coordinate planes.}
      \label{tab1}
    \end{table}
\normalsize
Note that $H_1$ and $H_2$ are the tangency points of (16) and (17), and they are above paraboloid (3) iff $3a>b$. The nodal curve connects points $H_i$ (umbilical center) and $K_i$ (outcrop) (cf. \cite{cayley}, p. 327). Therefore, points $E_i$ are real if and only if $3a\le b$. If $3a=b$, then points $E_i$, $F_i$, $G_i$, and $H_i$ coincide. From the table we can see that $G_1$ and $G_2$ are real and finite points if and only if $b>2a$. Note also that since $a<b$, $J_1$ and $J_2$ are not real points. We can also notice that the vertex of curves (16) and (19) is point $\left(0,0,\frac{1}{b}\right)$. Similarly, the vertex of curves (17) and (18) is point $\left(0,0,\frac{1}{a}\right)$.
\begin{figure}
\centering
  \includegraphics[width=0.8\linewidth]{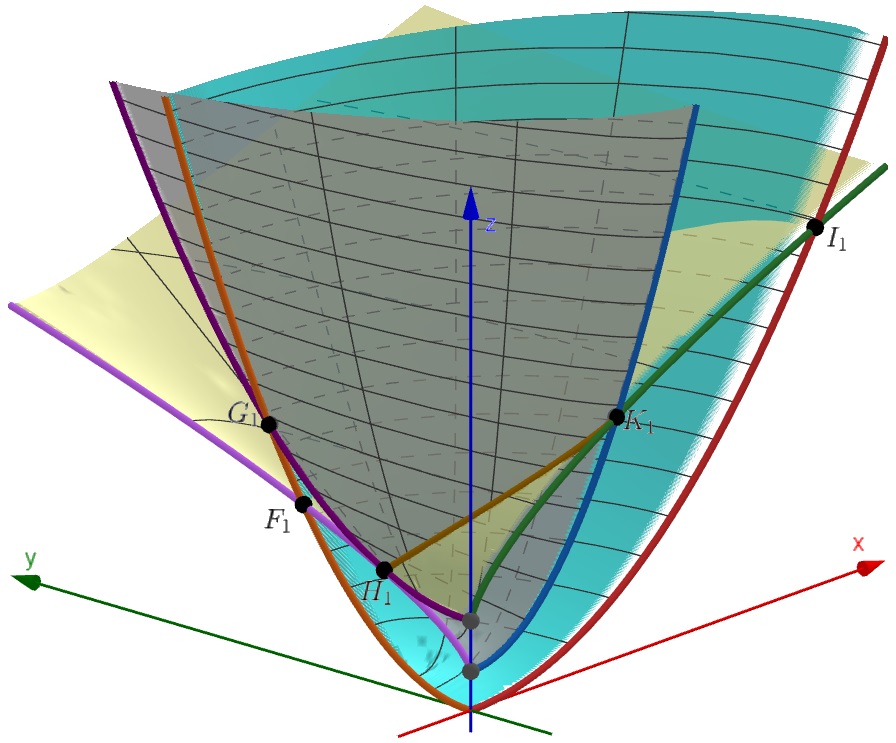}
  \captionof{figure}{Only the first quadrant: the caustics (gray and yellow) of paraboloid (cyan), curves (14) (orange), (15) (red), (16) (magenta), (17) (purple), (18) (green), (19) (blue), the nodal curve (brown), and their intersection points (black): \url{https://www.geogebra.org/3d/gt5pj7jn}}
  \label{fig5}
\end{figure}
The number of normals on curves (16)-(19) can be found using the asymptotes and the hyperbolas introduced in Section 3. By considering the cases of tangency of the asymptotes and the hyperbolas with the paraboloid in each of cases $l=0$ and $m=0$ we obtain the following result (cf. \cite{caspari1}, p. 179).
\begin{lemma} 
\begin{enumerate}
\item Along semicubical parabola (16), the number of normals is 2 above points $H_i$, 2 at points $H_i$, and 2 below points $H_i$.

\item Along parabola (17), the number of normals is 3 above points $H_i$, 2 at points $H_i$, and 3 below points $H_i$.

\item Along semicubical parabola (18), the number of normals is 2 above points $K_i$, 2 at points $K_i$, and 4 below points $K_i$.

\item Along parabola (19), the number of normals is 3 above points $K_i$, 2 at points $K_i$, and 1 below points $K_i$.
\end{enumerate}
\end{lemma}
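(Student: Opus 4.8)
The plan is to specialise the quintic normal equation of Section~3 to a point $A$ lying in a coordinate plane, where it degenerates, and then to reinterpret its roots geometrically by separating the feet that lie in the plane from those that lie off it. I will first treat the plane $x=0$, writing $A=(0,m,n)$, so that $l=0$. The key observation is that a foot $B=(x,y,z)$ of a normal through $A$ has either $x=0$ or $x\neq0$, and I would count the two families separately. The feet with $x=0$ lie on the section $z=\tfrac{b}{2}y^{2}$ of the paraboloid, and the requirement that their in--plane normal pass through $A$ is exactly the planar problem of Section~2; hence, by the result quoted there, the number of such feet is $1$, $2$, or $3$ according as $A$ lies below, on, or above the semicubical parabola $(16)$. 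For the feet with $x\neq0$ the conditions $\tfrac{x}{-ax}=z-n$ force $z=n-\tfrac1a$ and $y=\tfrac{ma}{a-b}$, and substitution into $(3)$ gives $x^{2}=Q/a$ with
\[
Q=2n-\frac{2}{a}-\frac{a^{2}bm^{2}}{(a-b)^{2}};
\]
thus there are $2$, $1$, or $0$ such feet according as $Q>0$, $Q=0$, or $Q<0$. I would then check that the locus $Q=0$ is precisely curve $(17)$, and that when $Q=0$ the coincident off--plane foot returns to $x=0$ and equals an in--plane foot (it corresponds to the normal--parameter value $\tau=-\tfrac1a$, which one verifies to be a root of the in--plane cubic $C(\tau)=2b(\tau+n)(\tau+\tfrac1b)^{2}-m^{2}$ on $(17)$), so that it must not be counted twice.

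With this decomposition, claims (1) and (2) would follow by tracking the in--plane count and the sign of $Q$ along each curve. On $(16)$ the in--plane count is $2$ at every point (with the cusp, the focus of the section, as the usual single exception where it is $1$, exactly as in Section~2), since its points lie on the planar caustic; substituting the parametrisation of $(16)$ into $Q$ produces a function of the curve parameter that I expect to be nonpositive and to vanish only at $H_i$, so the off--plane family is empty and the total is constantly $2$, giving (1). On $(17)$ the off--plane pair has already merged into a counted in--plane foot, so the total equals the in--plane count; I would show that $(17)$ lies inside the cusp region of $(16)$, to which it is tangent at $H_i$, so the in--plane count is $3$ and drops to $2$ exactly at the tangency, giving (2).

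The plane $y=0$ is handled in the same manner, with the roles of $a,b$ and of $x,y$ interchanged: now the in--plane count is governed by the semicubical parabola $(18)$, the off--plane feet satisfy $z=n-\tfrac1b$, $x=\tfrac{lb}{b-a}$, $y^{2}=\widetilde{Q}/b$ with $\widetilde{Q}=2n-\tfrac2b-\tfrac{ab^{2}l^{2}}{(b-a)^{2}}$, and $(19)$ is the locus $\widetilde{Q}=0$. The decisive difference, which comes from $a<b$, is that along $(18)$ the quantity $\widetilde{Q}$ genuinely changes sign, being positive near the vertex and negative higher up, with the change occurring at $K_i$; hence on $(18)$ the total is $2+2=4$ below $K_i$ and $2+0=2$ above it, which is (3). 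Correspondingly, I would show that $(19)$ crosses $(18)$ \emph{transversally} at $K_i$ (in contrast to the tangency at $H_i$), so that the in--plane count along $(19)$ is $1$ below, $2$ at, and $3$ above $K_i$; since here too the merged off--plane foot coincides with an in--plane foot (now at $\tau=-\tfrac1b$), the total equals this in--plane count, giving (4).

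The hard part will be the analysis at the exceptional points $H_i$ and $K_i$, where several degeneracies collide: the off--plane pair merges ($Q=0$ or $\widetilde{Q}=0$), the merged foot returns to the coordinate plane, and it coincides with a root of the in--plane cubic. I must show there is no double counting and decide whether that root is double or simple. Concretely, I expect the identities $C(-\tfrac1a)=C'(-\tfrac1a)=0$ at $H_i$ against $D(-\tfrac1b)=0$ but $D'(-\tfrac1b)\neq0$ at $K_i$ (where $D(\tau)=2a(\tau+n)(\tau+\tfrac1a)^{2}-l^{2}$ is the in--plane cubic for $y=0$) to be exactly what separates the tangency at $H_i$ from the transversal crossing at $K_i$, and hence the constant count along $(16)$ from the jump along $(18)$. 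Establishing these coincidences, together with the correct inside/outside position of $(17)$ and $(19)$ relative to the planar caustics, is the crux; the remaining sign bookkeeping is routine.
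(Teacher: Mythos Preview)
Your approach is correct and is essentially the one the paper indicates: the paper's ``proof'' is a single sentence saying to use ``the asymptotes and the hyperbolas introduced in Section~3'' in the cases $l=0$ and $m=0$, and your in--plane/off--plane split is exactly that decomposition---the hyperbola $(0,m/(1+bt),n+t)$ (resp.\ $(l/(1+at),0,n+t)$) gives your in--plane feet, while the asymptote line $(t,ma/(a-b),n-1/a)$ (resp.\ $(lb/(b-a),t,n-1/b)$) gives your off--plane pair. Your identification of $(17)$ with $Q=0$ and $(19)$ with $\widetilde Q=0$, and the tangency/transversality dichotomy via $C'(-1/a)=0$ at $H_i$ versus $D'(-1/b)\neq0$ at $K_i$, are precisely the ``tangency of the asymptotes \dots\ with the paraboloid'' that the paper alludes to but does not spell out; so you have supplied the details the paper omits rather than taken a different route.
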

Note also that at points $\left(0,0,\frac{1}{b}\right)$ and $\left(0,0,\frac{1}{a}\right)$, the number of normals is 1 and 3, respectively (cf. \cite{caspari1}, p. 175, footnote).
\section{Main results}
We are now ready to classify all possible cases of the positions of the caustics with respect to their paraboloid. The following theorem also solves Apollonius problem for the points of the paraboloid itself by considering all possible cases.

\begin{theorem} 
\begin{enumerate}

\item If $a<b\le2a$, then only one of the caustics intersect the paraboloid and the intersection curve divide the paraboloid into 2 regions, where the number of normals is 1 (lower) and 3 (upper region). On the intersection curve the number of normals is 2.

\item If $2a<b<3a$, then both of the caustics intersect the paraboloid and the intersection curves divide the paraboloid into 4 regions, where the number of normals is 1, 3 and 5. On the intersection curves the number of normals is 2 (lower) or 4 (upper curve), except points $G_i$, where the number of normals is 3.

\item If $b=3a$, then both of the caustics intersect the paraboloid, the intersection curves themselves intersect at two points and divide the paraboloid into 5 regions, where the number of normals is 1 (lower), 3 (middle) and 5 (upper region). On the intersection curves the number of normals is 2 (lower) or 4 (upper curve), including points $E_1=E_2$ and $E_3=E_4$, which in this case also coincide with $F_i$, $G_i$, and $H_i$ for $i=1,2$, respectively, where the number of normals is 2.

\item If $3a<b$, then both of the caustics intersect the paraboloid, the intersection curves themselves intersect at four points and divide the paraboloid into 7 regions, where the number of normals is 1 (lower), 3 (middle), and 5 (upper region). On the intersection curves the number of normals is 2 (lower) or 4 (upper parts of the curves), except points $E_i$ ($i=1,2,3,4$) and $G_i$ ($i=1,2$), where the number of normals is again 3.
\end{enumerate}
\end{theorem}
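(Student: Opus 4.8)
The plan is to reduce the entire classification to the behaviour of the two intersection curves (8) and (9), governed throughout by the global principle established after (7): a point below both caustics carries $1$ normal, one between them $3$, one above both $5$, while a point on a caustic carries $2$ or $4$ according to whether it bounds the $1$/$3$ or the $3$/$5$ region. Since both caustics and the paraboloid are symmetric under $(x,y)\mapsto(\pm x,\pm y)$, it suffices to control the traces of everything on the planes $x=0$ and $y=0$ together with the self-intersections, and then propagate by symmetry. Thus the real content is topological: decide which caustic actually meets the paraboloid, how many times the two intersection curves cross, and how these curves cut the paraboloid into pieces. The normal count on each piece is then forced by the global principle and the Lemma.

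First I would run a sign analysis on (8) and (9) to find the exact parameter ranges producing real points, i.e. the $t$ (resp. $s$) for which both $x^2\ge0$ and $y^2\ge0$. The relevant critical values are $t\in\{-3/a,\,-3/b,\,-1/a,\,-1/b,\,-3/(a+b)\}$, and because $0<a<b$ their ordering is fixed except for two comparisons: $-3/(a+b)$ versus $-1/a$, which flips exactly at $b=2a$, and $-3/b$ versus $-1/a$, which flips exactly at $b=3a$. These two coincidences are precisely the source of the four cases. The first ($b=2a$, a pole of the parametrization meeting a zero) controls whether caustic (7) meets the paraboloid at all, and reproduces the earlier fact that the plane-section points $G_i$ are real iff $b>2a$; the second ($b=3a$, two zeros merging) controls the reality of the self-intersections $E_i$, consistent with the already-established statement that the $E_i$ are real iff $3a\le b$.

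Next I would pin the curves down by their landmark points. Intersecting paraboloid, caustic (6) and a coordinate plane gives $F_i$ on $x=0$ and $I_i$ on $y=0$, both always real, so caustic (6) always meets the paraboloid in a curve threading $F_i$ and $I_i$. Intersecting with caustic (7) gives $G_i$ on $x=0$ (real iff $b>2a$) and $J_i$ on $y=0$ (never real since $a<b$); together with the emptiness of (9) for $b\le2a$ from the sign analysis, this shows caustic (7) meets the paraboloid only for $b>2a$, yielding the "only one caustic intersects" statement of case (1). The self-intersections $E_i$, already computed, are absent for $b<3a$, collapse pairwise onto $F_i,G_i,H_i$ at $b=3a$, and split into four distinct points for $b>3a$. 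Assembling these with symmetry, I would show that in each regime the intersection curves form the stated configuration (one oval for $b\le2a$; two disjoint ovals for $2a<b<3a$; two ovals tangent at two points for $b=3a$; two ovals crossing at four points for $b>3a$) and count the regions $2,4,5,7$ by a Jordan-curve / Euler-characteristic bookkeeping on the paraboloid.

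Finally, for the normal counts the global principle labels the stacked regions $1,3,5$ from bottom to top and labels each bounding curve $2$ (lower, between $1$ and $3$) or $4$ (upper, between $3$ and $5$). The exceptional values are read off from the Lemma and the nodal-curve fact: along the plane traces (17) and (19) the count is $3$ on either side of the tangency and outcrop points, so the $G_i$ (where curve (9) touches the paraboloid) carry $3$ rather than $2$ or $4$, and the $E_i$, lying on the nodal curve, carry $3$ as well. I expect the main obstacle to be the global topological step: the sign analysis is a routine if lengthy chase, and the pointwise normal counts follow from results already in hand, but proving rigorously that the parametrizations (8)--(9) trace out exactly the claimed simple closed curves, with no spurious extra components or self-intersections, and that they partition the paraboloid into exactly $2,4,5,7$ regions is the step that carries the real weight.
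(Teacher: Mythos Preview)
Your proposal is correct and follows essentially the same route as the paper: invoke the global below/between/above principle for the counts $1,3,5$, use the Lemma and the nodal-curve fact for the boundary and exceptional points, and let the thresholds $b=2a$ and $b=3a$ emerge from the reality conditions on $G_i$ and $E_i$. The paper's own proof is considerably terser---it simply cites the already-computed coordinates of $E_i,G_i,H_i$ and the Lemma, and closes with the single observation that $G_i$ lies above $H_i$ iff $b\le 3a$---so your explicit sign analysis of (8)--(9) and the topological region-counting are more detail than the paper itself provides, and your acknowledged worry about rigorously establishing the curve topology is a step the paper also leaves implicit.
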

\begin{proof}
If point $A$ is below, between and above the two caustics, then the number of normals is 1, 3, and 5, respectively. On the caustics a pair of normals coincide, and therefore for the points of the caustics the number of normals is 2 or 4, except the points of the coordinate planes and the intersection (nodal) curves of these two caustics where the number of normals can be again 1 or 3. If point $A$ is on the nodal curve, then two pairs of normals coincide, and therefore the number of normals of paraboloid (3) concurrent at point $A$ is 3, except the endpoints of the nodal curve, where it is again 2. The nodal curve intersects the paraboloid at points $E_i$, and therefore the number of normals at points $E_i$ is 3 when $3a<b$ and 2 when $b=3a$. The other cases follow from the cases, which were listed in Lemma 6.1. It remains only to note that if $b\le3a$, then point $G_i$ is above point $H_i$ or coincides with point $H_i$, and if $3a<b$, then point $G_i$ is below point $H_i$.
\end{proof}
\begin{figure}
\centering
  \includegraphics[width=0.4\linewidth]{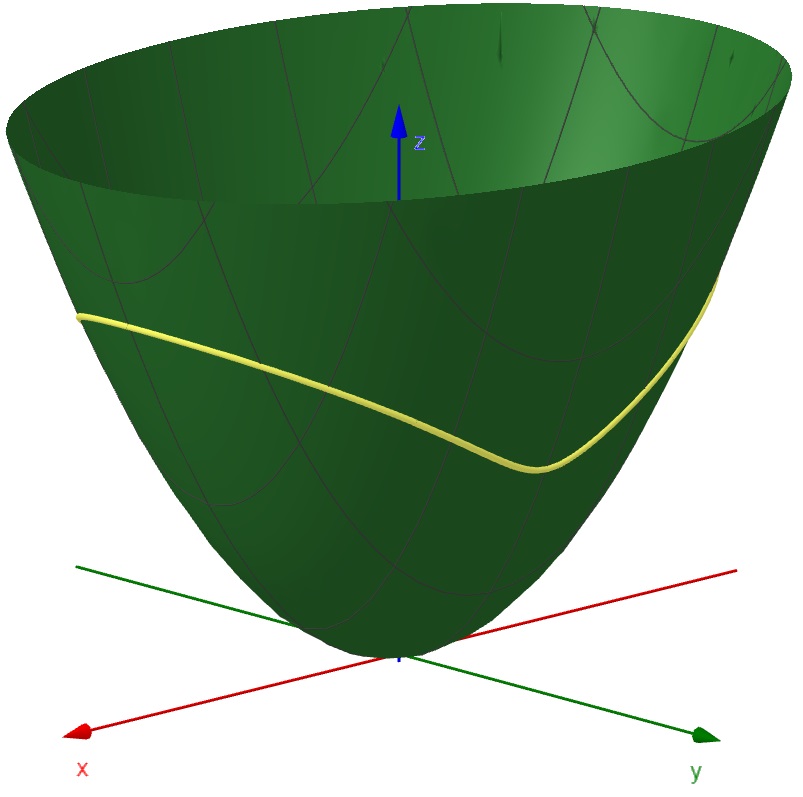}  \includegraphics[width=0.4\linewidth]{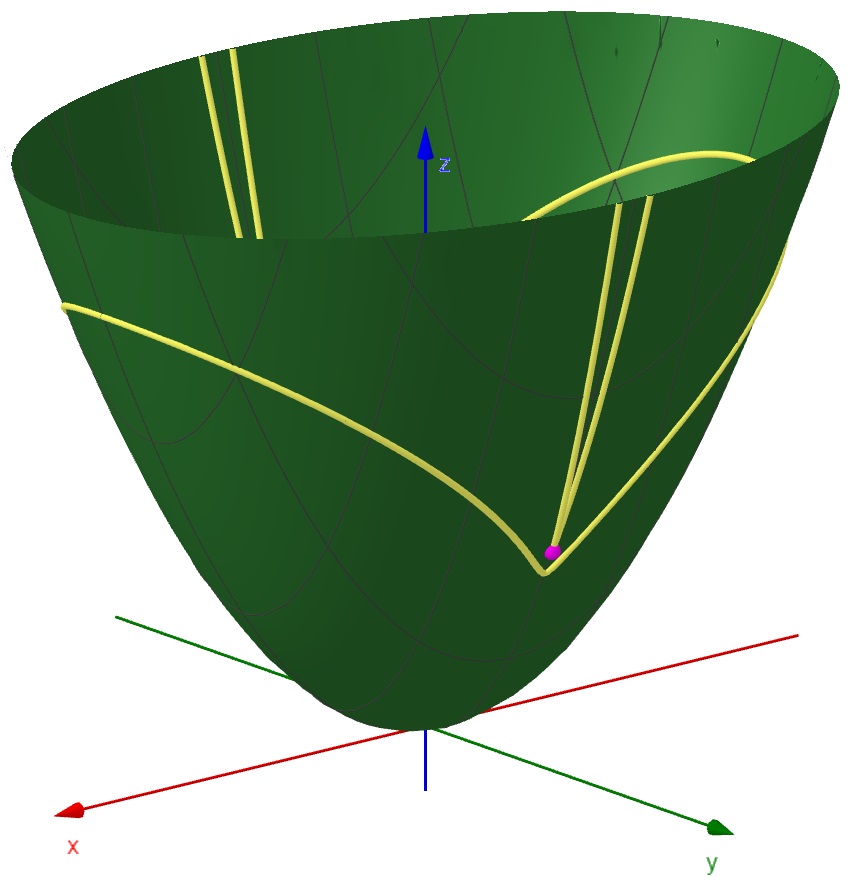}\\
  \includegraphics[width=0.4\linewidth]{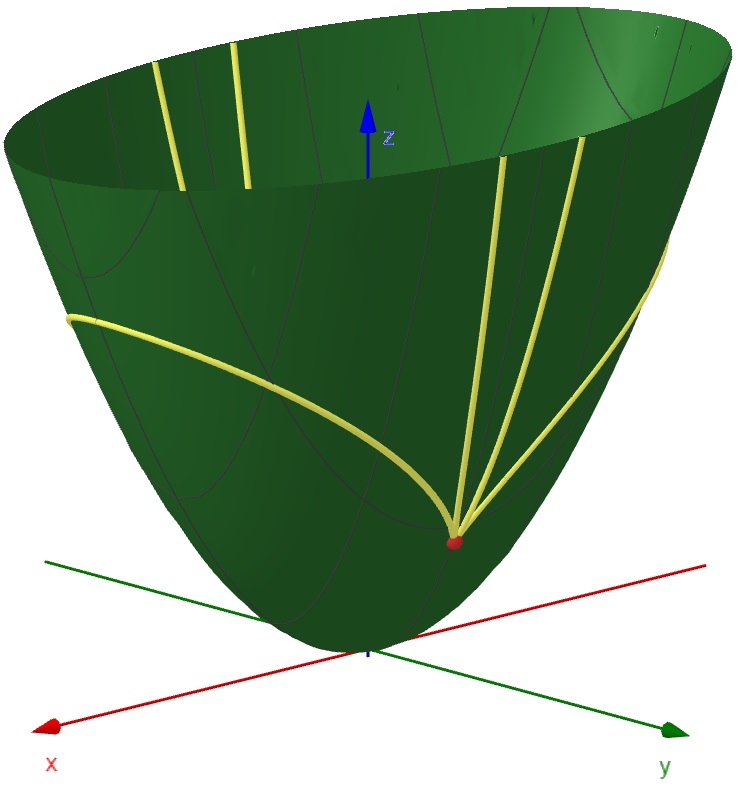}   \includegraphics[width=0.4\linewidth]{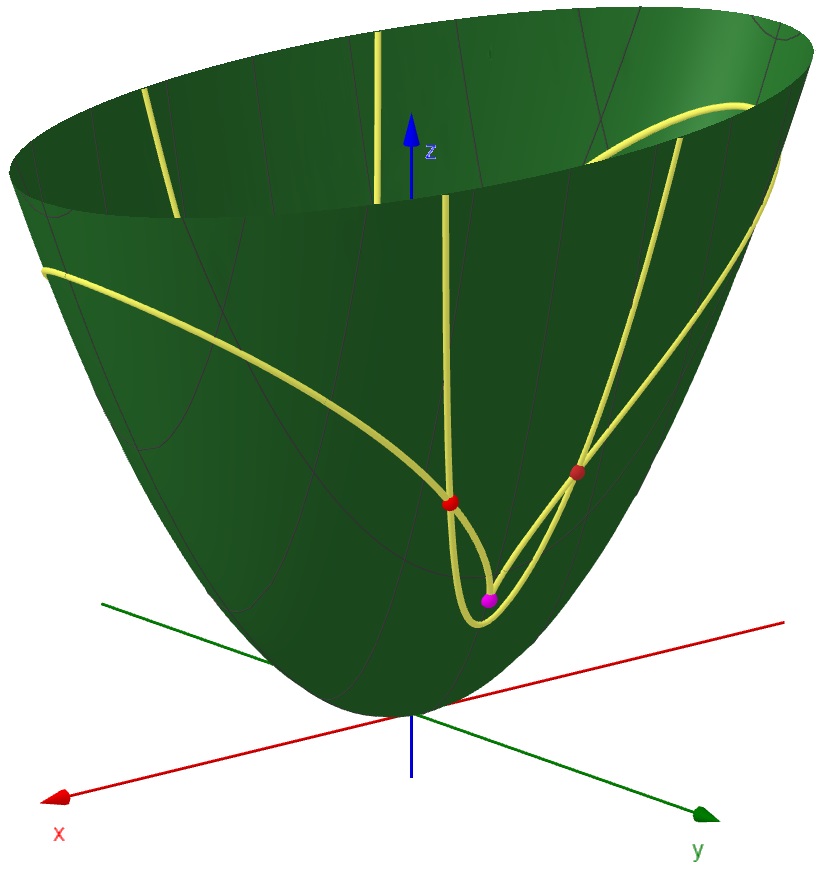}
  \captionof{figure}{$a<b\le2a$ (top left), $2a<b<3a$ (top right), $b=3a$ (bottom left), and $3a<b$ (bottom right). Paraboloid (green), the intersection curves (yellow) with its caustics (not shown), points $E_i$ (red), and points $G_i$ (magenta).}
  \label{fig6}
\end{figure}
Note that in our classification we ignored non-real or infinite points (cf. \cite{caspari1}, p. 174). For example, when $b=2a$, there is an infinite intersection point $G_i$ of one of the caustics with the paraboloid. But the resulting picture is not different from Figure \ref{fig6} (top left). So, for the classification purposes, one can safely join it with the case $a<b<2a$.
\section{Conclusion}
In the paper the history of caustic surfaces of a paraboloid and their uses in the solution of F. Caspari for the classical problem of Apollonius about the number of normals of an elliptical paraboloid passing through a given point is presented. The problem is then extended for the points of the paraboloid itself. The solution to this more specific question is completed with the help of some new curves which are obtained when the paraboloid is intersected with its caustics. New parametrizations for these curves and also for the nodal curve are given. Several different ways of drawing the paraboloid's caustics and their intersections in GeoGebra are demonstrated. Full classification of all possible cases of the intersections of the paraboloid and its caustics is given.

\section*{Acknowledgments}

\section{Declarations}
\textbf{Ethical Approval.}
Not applicable.
 \newline \textbf{Competing interests.}
None.
  \newline \textbf{Authors' contributions.} 
Not applicable.
  \newline \textbf{Funding.}
This work was completed with the support of ADA University Faculty Research and Development Fund.
  \newline \textbf{Availability of data and materials.}
Not applicable


\begin{thebibliography}{1}

\footnotesize
\baselineskip=17pt

\bibitem{aliyev} Aliyev, Y.N., Apollonius Problem and Caustics of an Ellipsoid, International Electronic Journal of Geometry, in Press, 2024. Available as preprint in Arxiv:
\url{https://doi.org/10.48550/arXiv.2305.06065}

\bibitem{apol} Apollonius of Perga, \textit{Conics Books V to VII, The Arabic Translation of the Lost Greek Original in the Version of the Banū Mūsā}, Gerald J. Toomer (ed.), Springer New York, NY (1990). \url{https://doi.org/10.1007/978-1-4613-8985-9}

\bibitem{arnold} V.I. Arnol'd, A.A. Kirillov, V.M. Tikhomirov, M.A. Shubin, On First All-Union Mathematical Competition for Students, Uspekhi Mat. Nauk, 30:4(184) (1975), 281-288. (in Russian) \url{https://www.mathnet.ru/rus/rm4283}

\bibitem{bains} M.S. Bains and J.B. Thoo,
The Normals to a Parabola and the Real Roots of a Cubic, The College Mathematics Journal , Sep., 2007, Vol. 38, No. 4 (Sep., 2007), pp. 272-277. \url{https://www.jstor.org/stable/27646502}

\bibitem{blas} W. Blaschke, Einführung in die Differentialgeometrie, Springer, Berlin, 1950.

\bibitem{brill} L. Brill, Central Surface of a Paraboloid, Geometric Model,  No. 149. Ser. 1, No. 2a, The National Museum of American History (1892). \url{https://americanhistory.si.edu/collections/nmah_693993}

\bibitem{brill1} L. Brill, Catalog mathematischer Modelle
für den höheren mathematischen Unterricht,
Dritte Auflage. Darmstadt. 1885. \url{https://opendigi.ub.uni-tuebingen.de/opendigi/BRILL#p=264}

\bibitem{caspari} F. Caspari, Die Krümmungsmittelpunktsfläche des elliptischen Paraboloids, Dissert., Reimer, Berlin
(1875). \url{http://resolver.sub.uni-goettingen.de/purl?PPN310966825}

\bibitem{caspari1} F. Caspari, Die Krümmungsmittelpunktsflächen des elliptischen Paraboloids. Journal für die reine und angewandte Mathematik, Band 81, 1876, 143-192. \url{https://resolver.sub.uni-goettingen.de/purl?PPN243919689_0081}

\bibitem{clebsch} A. Clebsch, Ueber das Problem der Normalen bei Curven und Oberflächen der zweiten Ordnung, \textit{Journal für die reine und angewandte Mathematik}, \textbf{62}, 64-109 (1863). \url{http://eudml.org/doc/147884}

\bibitem{cayley} A. Cayley, On the centro-surface of an ellipsoid, \textit{Transactions of the Cambridge Philosophical Society}, \textbf{12}(1), 319-365 (1873). Also included in \textit{The collected mathematical papers of Arthur Cayley}, Vol. VIII, Cambridge University Press, Cambridge, 316-365 (1895). \url{http://name.umdl.umich.edu/ABS3153.0008.001}

\bibitem{cayley1} A. Cayley, XIII. A memoir upon caustics, Phil. Trans. R. Soc.147, (1857) 273–312.
\url{http://doi.org/10.1098/rstl.1857.0014}

\bibitem{dom} G. Domokos, Z. Lángi, T. Szabó, A topological classification of convex bodies, \textit{Geom. Dedicata} \textbf{182}, 95–116 (2016). \url{https://doi.org/10.1007/s10711-015-0130-4}

\bibitem{space} K.D. Drach, V. Komlev, Space evolute of an elliptic paraboloid and a one-sheeted hyperboloid of M. Schilling catalogue, Category: Space caustics of quadrics, Geometric Models Collection
of V.N. Karazin Kharkiv National University.

\url{http://touch-geometry.karazin.ua/m/space-evolute-of-an-elliptic-paraboloid}

\bibitem{dyck2} W. Dyck, Die Centralfläche des einschaligen Hyperboloids, \textit{Abhandlungen und Erläuterungen zu den mathematischen Modellen der Serien I-XII des Modellverlags}, unter Leitung von L. Brill, 13-18, Darmstadt (1877-1885). \url{https://opendigi.ub.uni-tuebingen.de/opendigi/BRILL#tab=struct&p=21}

\bibitem{goet} Curvature centre point surface of the elliptic paraboloid. Krümmungsmittelpunktsfläche Modellen: 345, Gypsum; Göttinger Sammlung mathematischer Modelle und Instrumente,  Georg-August-Universität Göttingen.

\url{https://sammlungen.uni-goettingen.de/objekt/record_DE-MUS-069123_345/1/-/}

\bibitem{hann} K. Hann, What's the Bound on the Average Number of Normals? The American Mathematical Monthly, Vol. 103, No. 10 (Dec., 1996), pp. 897-900. \url{https://www.jstor.org/stable/2974616}

\bibitem{junker} H. Junker, \textit{Anschauungsmodelle in der mathematischen Forschung deutscher Gelehrter 1860–1877}, Dissert., Martin-Luther-Universität
Halle-Wittenberg (2023). \url{https://opendata.uni-halle.de/bitstream/1981185920/110975/1/Dissertation_MLU_2023_JunkerHannes.pdf}

\bibitem{kazan} 3D Model, Geometric model Central Surface of a Paraboloid, Museum of Kazan Federal University.

\url{https://sketchfab.com/3d-models/geometric-model-central-surface-of-a-paraboloid-370bcc66f0924ab1b81abf76b5382fe4}

\url{https://sketchfab.com/3d-models/geometric-model-central-surface-of-a-paraboloid-5822c6b7c97c42659e455d433078b97f}

\url{https://zenodo.org/records/10227546}

\bibitem{kheshin} B.A. Khesin, S.L. Tabachnikov (Ed.). Arnold: Swimming Against the Tide, AMS, 2014.

\bibitem{kooij} J.F. Kooij, Caustics:
The von Seidel equations, Master's Thesis under the supervision of Prof. dr. J. Top
and Dr. A. E. Sterk, University of Groningen, October 2016. \url{https://fse.studenttheses.ub.rug.nl/14823/1/Masterthesis_Josselin.pdf}

\bibitem{mcgif} J. McGiffert, Normals to the Parabola, Mathematics News Letter, Vol. 7, No. 6 (Mar., 1933), pp. 12-17. \url{https://www.jstor.org/stable/3027768}

\bibitem{mish} A. Mishchenko, A. Fomenko, A Course Of Differential Geometry And Topology, Mir Publishers, Moscow, 1988.

\bibitem{monge} G. Monge, \textit{Application de l’Analyse à la Géométrie} (5e édition), Paris (1850). \url{https://gallica.bnf.fr/ark:/12148/bpt6k96431405/f344.item}

\bibitem{naev} A. Naeve, Focal surface of elliptic paraboloid (total and patch), YouTube videos, Feb 27, 2015. \url{https://youtu.be/McxKUQSPkcQ?si=lFCfKeKHYnP6taxC} and \url{https://youtu.be/pCqIV9FMpds?si=8-XEeRX9LI372IzR}. Playlist: \url{https://youtube.com/playlist?list=PL2B46F24ED8AB1FD2&si=V1VKmuPEdbZTlmSq}

\bibitem{naev1} A. Naeve, Focal Surfaces, the website of the Knowledge Management Research Group. \url{https://kmr.dialectica.se/wp/research/math-rehab/learning-object-repository/geometry-2/metric-geometry/euclidean-geometry/geometric-optics/focal-surfaces/}

\bibitem{nov} S.P. Novikov, A.T. Fomenko, Basic Elements of Differential Geometry and Topology, Mathematics and its Applications series, volume 60, Springer, Dordrecht, 1990.

\bibitem{pat} N.M. Patrikalakis, T. Maekawa, Shape Interrogation for Computer
Aided Design and Manufacturing, Springer, Berlin, 2002.

\bibitem{post} T. Poston, I. Stewart: Catastrophe Theory and its Applications. Series: Surveys and Reference Works in Mathematics, 2, Pitman, 1979.

\bibitem{schil} Martin Schilling, Catalog
mathematischer Modelle
für den hoheren mathematischen Unterricht, Siebente Auflage, Verlag von
Martin Schilling, Leipzig, 1911. \url{https://libsysdigi.library.uiuc.edu/ilharvest/mathmodels/0006cata/0006CATA.pdf}

\bibitem{schlei} L. Schleiermacher, Die Brennfläche eines Strahlensystems, welche mit der
Fläche der Krümmungscentra des elliptischen Parahboloids in collinearer Verwandtschaft steht, \textit{Abhandlungen und Erläuterungen zu den mathematischen Modellen der Serien I-XII des Modellverlags}, unter Leitung von L. Brill, 5-11, Darmstadt (1877-1885). \url{https://opendigi.ub.uni-tuebingen.de/opendigi/BRILL#p=13}

\bibitem{schmidt} R.F. Schmidt, Analytical Caustic Surfaces, NASA, Technical Memorandum 87805, 1987. \url{https://ntrs.nasa.gov/citations/19880001678} and \url{https://core.ac.uk/download/pdf/42834678.pdf}

\bibitem{schroder} H. Schröder, \textit{Die Zentraflächen der Paraboloide und Mittelpunktsflächen zweiten Grades}, Dissert., Halle a. S., H. John (1913).
\url{http://resolver.sub.uni-goettingen.de/purl?PPN316295612}

\bibitem{schroder1} H. Schröder, Die Zentraflächen der Paraboloide und Mittelpunktsflächen zweiten Grades, Abhandlung zu den Modellen Serie XLIII, Nr. 1-7, Leipzig, Verlag von Martin Schilling, 1913.

\bibitem{seidel} P.L. von Seidel, E. Kummer, Uber die Brennfl\"{a}che eines Strahlenb\"{u}ndels, welches durch
ein System von centrirten sph\"{a}rischen Gl\"{a}sern hindurch gegangen ist. Monatsberichte der Königlichen Preussische Akademie des Wissenschaften zu Berlin,
pages 695–705, 1862. \url{https://www.biodiversitylibrary.org/item/112407#page/756/mode/1up}

\bibitem{seidel1} Seidel, Ueber die Theorie der kaustischen Flächen, welche
in Folge der Spiegelung oder Brechung von Strahlenbüscheln
an den Flächen eines optischen Apparats erzeugt werden Die Fortschritte der Physik : dargest. von d. Physikalischen Gesellschaft zu Berlin. 13. 1857 (1859) 212-214. \url{https://www.bavarikon.de/object/bav:BSB-MDZ-00000BSB10707401?p=1&cq=Die+Fortschritte+der+Physik&lang=de}

\bibitem{seidel2} L. Seidel, Ueber die Entwicklung der Glieder 3ter Ordnung, welche den Weg eines ausserhalb der Ebene der Axe gelegenen Lichtstrahles durch ein System brechender Medien bestimmen, Zur Dioptrik. 1856, Astronomische Nachrichten Nr. 1027-1029. pages 289-332. \url{https://scholar.archive.org/search?q=L.+Seidel+Dioptrik}

\bibitem{smith} Ch. Smith, An Elementary Treatise on Solid Geometry, Macmillan, London, 1907.

\bibitem{wiki} Focal surface, Wikipedia, the free encyclopedia. \url{https://en.wikipedia.org/wiki/Focal_surface}

\end{thebibliography}
\end{document}